\newtheorem{lemma}{Lemma}
\newtheorem{theorem}{Theorem}
\DeclarePairedDelimiter\ceil{\lceil}{\rceil}
\DeclarePairedDelimiter\floor{\lfloor}{\rfloor}
\begin{document}
\title{Bounds for the Graham-Pollak Theorem for Hypergraphs}
\author{Anand Babu  \ \ \ \  Sundar Vishwanathan}
\affil{Department of Computer Science {\&} Engineering, \\ IIT Bombay}
\date{}
\maketitle
\begin{abstract}
Let $f_r(n)$ represent the minimum number of complete $r$-partite $r$-graphs required to partition the edge set of the complete $r$-uniform hypergraph on $n$ vertices. The Graham-Pollak theorem states that $f_2(n)=n-1$.\\
An upper bound of $(1+o(1)){n \choose \lfloor{\frac{r}{2}}\rfloor}$ was known. Recently this was improved to $\frac{14}{15}(1+o(1)){n \choose \lfloor{\frac{r}{2}}\rfloor}$ for even $r \geq 4$. A bound of $\bigg[\frac{r}{2}(\frac{14}{15})^{\frac{r}{4}}+o(1)\bigg](1+o(1)){n \choose \lfloor{\frac{r}{2}}\rfloor}$ was also proved recently. The smallest odd $r$ for which $c_r < 1$ that was known was for $r=295$.  In this note we improve this to $c_{113}<1$ and also give better upper bounds for $f_r(n)$, for small values of even $r$.
\end{abstract}
\section{Introduction}
An $r$-uniform hypergraph $H$ (also referred to as an $r$-graph) is said to be $r$-partite if its vertex set $V(H)$ can be partitioned into sets $V_1, V_2, \cdots ,V_r$, so that every edge in the edge set $E(H)$ of $H$ consists of choosing precisely one vertex from each set $V_i$. That is, $E(H) \subseteq V_1 \times V_2 \times \cdots \times V_r$. Let $f_r(n)$ be the minimum number of complete $r$-partite $r$-graphs needed to partition the edge set of the complete $r$-uniform hypergraph on $n$ vertices. The problem of determining $f_r(n)$ for $r>2$ was proposed by Aharoni and Linial\cite{alon1986decomposition}. For $r=2$, $f_2(n)$ is the minimum number of bipartite subgraphs required to partition the edge set of the complete graph. Graham and Pollak(\cite{graham1971addressing,graham1972embedding} see also \cite{graham1978distance}) proved that at least $n-1$ bipartite graphs are required to cover a complete graph. Other proofs were found by Tverbeg\cite{tverberg1982decomposition}, Peck\cite{peck1984new} and Vishwanathan\cite{vishwanathan2008polynomial,vishwanathan2013counting}.
\paragraph*{}
For a general $r$, constructions due to Alon \cite{alon1986decomposition} and later Cioab{\u{a}} et.al  \cite{cioabua2009decompositions} give an upper bound for $f_r(n)$. Cioab{\u{a}} et.al showed that by ordering the vertices and then by considering the collection of $r$-graphs whose even positions are fixed, partitions the edge set of the complete $r$-uniform hypergraph. The cardinality of the collection of $r$-graphs obtained so is ${n-(r+1)/2 \choose (r-1)/2}$ for odd $r$, and ${n-r/2 \choose r/2}$ for even $r$. The upper bound described below is from the above construction and the lower bound is obtained using the ideas from linear algebra. 
\begin{equation*}
\begin{split}
\frac{2}{{2\floor {r/2} \choose \floor{r/2}}}(1+o(1)){n \choose \floor{\frac{r}{2}}}\leq f_r(n)\leq (1-o(1)){n \choose \floor{\frac{r}{2}}}.\\
\end{split}
\end{equation*}
Alon also proved that $f_3(n)=n-2$ \cite{alon1986decomposition}. Cioab{\u{a}} and Tait \cite{cioabua2013variations} showed that the construction is not tight in general but there was no asymptotic improvement to Alon's bound. In a breakthrough paper,  Leader, Mili{\'c}evi{\'c} and Tan \cite{leader2017decomposing} showed that $f_4(n) \leq (\frac{14}{15})(1+o(1)){n \choose 2}$. Using this they observed that $f_r(n) \leq (\frac{14}{15})(1+o(1)){n \choose \frac{r}{2}}$ for even $r$. Later, Leader and Tan\cite{leader2017improved} showed that for a general $r\geq 4$, $f_r(n)\leq c_r(1+o(1)){n \choose \floor{\frac{r}{2}}}$  where $c_r \leq \frac{r}{2}(\frac{14}{15})^{\frac{r}{4}}+o(1)$ and as a direct consequence showed that $c_{295}< 1$ \cite{leader2017improved}. The smallest odd $r_0$ for which $c_{r_0}<1$ is important since this implies that $c_r < 1$ for all $r>r_0$. In this note we improve the smallest known odd $r$, for which $c_r <1$ to $r=113$. We also give an improved upper bound for $f_r(n)$ for even $r$ and $8\leq r \leq 1096$ which is used in the above result. We show that for all even $r\geq 6$,
\begin{equation*}
\begin{split}
f_r(n)\leq (\frac{14}{15})^{\frac{r}{6}}(1+o(1)){n \choose \floor{\frac{r}{2}}}\\
\end{split}
\end{equation*}
\section{The Main Result}
Let $S$ and $T$ be two disjoint sets. Let ${{S} \choose {a}}\times {{T} \choose {b}}$ denote all subsets $X$ of $S \cup T$ s.t. $|X \cap S|=a$ and $|X \cap T|=b$.\\

A set $\Gamma$ of complete $r$-partite $r$-graphs over $S \cup T$ is said to {\em exactly cover} a hypergraph $F$, if the hypergraphs in $\Gamma$ are edge-disjoint and the union of the edges of the hypergraphs in $\Gamma$ is $F$. A complete $r$-partite $r$-graph is also referred to as a {\em block}.\\

Let $f_r(n)$ denote the minimum number of complete $r$-partite $r$-graphs required to exactly cover the edge set of the complete $r$-uniform hypergraph.

\begin{theorem}
For even $r\geq 6$, $f_r(n) \leq (\frac{14}{15})^\frac{r}{6}(1+o(1)) {n \choose \floor{\frac{r}{2}}}$.\\
(Here the o(1) term is as $n \rightarrow \infty$ with r fixed.)
\end{theorem}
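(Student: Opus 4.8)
My plan is to build the decomposition recursively, using the $r=6$ base case as the engine that supplies the $(14/15)$ factor once for every six units of uniformity. The known results give two ingredients: first, $f_4(n)\le(14/15)(1+o(1))\binom n2$ from Leader--Mili\'cevi\'c--Tan, and second, the general "tensor/product" method of Leader--Tan that shows how a good decomposition in uniformity $r_1$ and a good decomposition in uniformity $r_2$ combine into a decomposition in uniformity $r_1+r_2$ with the constants essentially multiplying (this is exactly how $c_r\le \tfrac r2(14/15)^{r/4}+o(1)$ was obtained — iterating the $r=4$ gadget $r/4$ times). The first step, then, is to isolate the cleanest version of this product lemma: if $f_{r_1}(n)\le c_1(1+o(1))\binom{n}{\lfloor r_1/2\rfloor}$ and $f_{r_2}(n)\le c_2(1+o(1))\binom{n}{\lfloor r_2/2\rfloor}$ with $r_1,r_2$ even, then $f_{r_1+r_2}(n)\le c_1c_2(1+o(1))\binom{n}{(r_1+r_2)/2}$, proved by splitting the vertex set, handling edges by their intersection pattern with the parts, and using complete multipartite blocks on each side whose product is again a complete multipartite block.

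The second step is to produce a strong enough base case at $r=6$, namely $f_6(n)\le(14/15)(1+o(1))\binom n3$ — note this is genuinely stronger than what the naive $f_4\cdot f_2$ product would give, which only yields $(14/15)\binom n3$ as well, so in fact for $r=6$ the product of the $r=4$ result with the trivial $r=2$ Graham--Pollak decomposition ($c_2=1$, since $f_2(n)=n-1=(1+o(1))n$) already suffices. So the base case at $r=6$ costs nothing new: apply the product lemma with $(r_1,r_2)=(4,2)$, $c_1=14/15$, $c_2=1$. The real content for larger $r$ is then pure iteration: write $r=6k$ or $r=6k+2$ or $r=6k+4$, and peel off blocks of six, each contributing a factor $14/15$, with the leftover $0$, $2$, or $4$ contributing a harmless factor $1$ (using $f_2$ or $f_4\le1\cdot(1+o(1))\binom n2$ — here for the leftover we may of course just use the trivial upper bound with constant $1$, or $14/15$ for a leftover of $4$, which only helps). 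After $k=\lfloor r/6\rfloor$ applications we get the constant $(14/15)^{\lfloor r/6\rfloor}$; to reach the clean exponent $r/6$ in the statement one observes that for the residue classes $r\equiv 2,4\pmod 6$ the stated bound $(14/15)^{r/6}$ is \emph{weaker} than $(14/15)^{\lfloor r/6\rfloor}$, so the claimed inequality still holds, and for $r\equiv 0$ it is exactly $(14/15)^{r/6}$.

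The main obstacle I anticipate is purely bookkeeping rather than conceptual: making the product lemma's $o(1)$ terms uniform enough that iterating a bounded (in $r$, since $r$ is fixed) number of times keeps the error as a single $o(1)$ as $n\to\infty$, and checking that the binomial coefficients multiply in the right way, i.e. that $\binom{n}{\lfloor r_1/2\rfloor}\binom{n}{\lfloor r_2/2\rfloor}$ and $\binom n{\lfloor(r_1+r_2)/2\rfloor}$ agree up to the $(1+o(1))$ factor when $r_1,r_2$ are even — which they do, both being $(1+o(1))n^{(r_1+r_2)/2}/((r_1/2)!(r_2/2)!)$ versus $(1+o(1))n^{(r_1+r_2)/2}/((r_1+r_2)/2)!$; the ratio is the constant $\binom{(r_1+r_2)/2}{r_1/2}$, so one must either absorb this constant (it is $\ge 1$, which goes the wrong way) or, as in Leader--Tan, arrange the construction on a \emph{partitioned} ground set so that the count is genuinely $\binom{n/2}{r_1/2}\binom{n/2}{r_2/2}$ and the constant works out. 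Getting that partition-and-count step exactly right, so that the product of two complete $r_i$-partite blocks is recognized as a single complete $(r_1+r_2)$-partite block and no edge is covered twice, is where I would spend the most care; everything after that is a one-line induction.
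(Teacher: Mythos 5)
There is a genuine gap, and it sits exactly at the point you flag and then defer: the ``product lemma'' $c_{r_1+r_2}\le c_{r_1}c_{r_2}$ does not follow from the split-the-vertex-set construction, and it is not what Leader--Tan prove (if it were available, $c_4=\frac{14}{15}$ alone would give $c_r\le(\frac{14}{15})^{r/4}$ with no $\frac{r}{2}$ factor, strictly stronger than both Leader--Tan and the theorem you are asked to prove). The problem is the one you half-identify and do not resolve: on a partitioned ground set $S\cup T$ with $|S|=|T|=\frac n2$, the $f_{r_1}(|S|)\cdot f_{r_2}(|T|)$ product blocks cover only the edges meeting $S$ in exactly $r_1$ vertices, which is a constant fraction $\binom{r}{r_1}2^{-r}(1+o(1))$ of all edges, not all of them. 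To cover the whole complete $r$-graph you must sum over every intersection pattern, and then the block count is $\sum_i f_i(\frac n2)f_{r-i}(\frac n2)\approx\frac{n^{r/2}}{(r/2)!}\cdot 2^{-r/2}\sum_{i\ \mathrm{even}}\binom{r/2}{i/2}c_ic_{r-i}$, a weighted \emph{average} of the products $c_ic_{r-i}$ rather than a single product. Since $c_2=1$ exactly (Graham--Pollak is tight), the pattern $(2,r-2)$ contributes $c_{r-2}$ with no gain and drags the average up; this is precisely why the achievable exponent is $\frac r6$ and not $\frac r4$. Your remark that $f_6\le\frac{14}{15}(1+o(1))\binom n3$ ``costs nothing new'' as the product of $f_4$ and $f_2$ is an instance of the same unproved step.

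The paper turns your averaging obstacle into the proof. It takes the full recurrence $f_m(n)\le 2f_m(\frac n2)+2f_{m-1}(\frac n2)+2f_{m-2}(\frac n2)f_2(\frac n2)+\cdots+[f_{\frac m2}(\frac n2)]^2$ (recursing on $n$ as well as $m$), uses $\sum_{i}\frac{1}{i!\,(N-i)!}=\frac{2^N}{N!}$ to see that $c_i=(\frac{14}{15})^{i/6}$ is a fixed point of the induced map on leading constants, and then verifies that the deficit from $c_2=1>(\frac{14}{15})^{1/3}$ is outweighed by the surplus from $c_4=\frac{14}{15}<(\frac{14}{15})^{2/3}$; the former enters with weight of order $m$ and the latter with weight of order $m^2$, which is where the restriction $m\ge 8$ (with $m=6$ checked separately) comes from. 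So the exponent $\frac r6$ is not a count of how many sixes fit into $r$; it is the solution of a balance equation, and any proof must engage with that balance. To rescue your outline you would need to replace the product lemma by this fixed-point analysis of the full intersection-pattern recurrence, together with the lower-order bookkeeping for the odd patterns and the $\log$ error terms.
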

\begin{proof}
We show that for even $m\geq 8$, and $n \geq m$,
\begin{equation*}
\begin{split}
f_m(n) &\leq (\frac{14}{15})^\frac{m}{6} \frac{n^{\frac{m}{2}}}{(\frac{m}{2})!}+n^{\frac{m}{2}-1} \log n
\end{split}
\end{equation*}
The proof is by induction on $m$ and $n$.\\
We use the following known bounds. $f_2(n) \leq n-1$, $f_3(n) \leq n-2$.\\
Suppose $m$ is a multiple of $4$. By dividing the set $[n]$ into two parts of size $\frac{n}{2}$ each, we get the following recurrence for $f_m(n)$.\\
\begin{equation*}
\begin{split}
f_m(n) &\leq 2\cdot f_m(\frac{n}{2})+ 2 \cdot f_{m-1}(\frac{n}{2})+ 2 \cdot f_{m-2}(\frac{n}{2})f_2(\frac{n}{2})+ 2\cdot f_{m-3}(\frac{n}{2})\cdot f_{3}(\frac{n}{2})+...+[f_{\frac{m}{2}}(\frac{n}{2})]^2\\
\end{split}
\end{equation*}

The bound $f_4(n) \leq (\frac{14}{15})\frac{n^2}{2!}+n\log n$, follows from \cite{leader2017decomposing}. We prove that $f_6(n) \leq (\frac{14}{15})\frac{n^3}{3!}+n^2 \log n$ first. The base case for $f_6(n)$ holds since $f_6(6)=1$. Assume it is true for all values less than $n$. 
\begin{equation*}
\begin{split}
f_6(n) &\leq 2f_6(\frac{n}{2})+2f_5(\frac{n}{2})+2f_2(\frac{n}{2})f_4(\frac{n}{2})+f_3(\frac{n}{2})f_3(\frac{n}{2})\\
&\leq 2\bigg[(\frac{14}{15})\frac{n^3}{8\cdot 3!}+\frac{n^2\log (\frac{n}{2})}{4}\bigg]+2\frac{n^2}{4\cdot 2!}+2\frac{n}{2}\bigg[(\frac{14}{15})\frac{n^2}{4\cdot 2!}+\frac{n\log (\frac{n}{2})}{2}\bigg]+\frac{n^2}{4}\\
&\leq (\frac{14}{15})\frac{n^3}{3!}+n^2 \log n\\
\end{split}
\end{equation*}
Now we prove that $f_m(n) \leq (\frac{14}{15})^\frac{m}{6} \frac{n^{\frac{m}{2}}}{(\frac{m}{2})!}+n^{\frac{m}{2}-1} \log n$. The base case for $f_m(n)$ holds since $f_m(m)=1$. Assume it is true for all values less than $n$. Since
\begin{equation*}
\begin{split}
f_m(n) &\leq 2\cdot f_m(\frac{n}{2})+ 2 \cdot f_{m-1}(\frac{n}{2})+ 2 \cdot f_{m-2}(\frac{n}{2})f_2(\frac{n}{2})+ 2\cdot f_{m-3}(\frac{n}{2})\cdot f_{3}(\frac{n}{2})+...+[f_{\frac{m}{2}}(\frac{n}{2})]^2\\
\end{split}
\end{equation*}
as stated earlier. By rearranging the terms according to even and odd indices we have,
\begin{equation*}
\begin{split}
f_m(n)&\leq 2\cdot f_m(\frac{n}{2})+ 2 \cdot f_{m-2}(\frac{n}{2})f_2(\frac{n}{2})+\cdots+[f_{\frac{m}{2}}(\frac{n}{2})]^2\\
&\qquad +2 \cdot f_{m-1}(\frac{n}{2})+2\cdot f_{m-3}(\frac{n}{2})\cdot f_{3}(\frac{n}{2})+\cdots+2\cdot f_{\frac{m}{2}+1}(\frac{n}{2})\cdot f_{\frac{m}{2}-1}(\frac{n}{2})\\
\end{split}
\end{equation*}
Substituting for $f_2(\frac{n}{2})$, $f_4(\frac{n}{2})$ and $f_6(\frac{n}{2})$ and using the inductive hypothesis for all even $i\geq 8$ and $f_i(\frac{n}{2})={\frac{n}{2} \choose \floor{\frac{i}{2}}}$ for all odd $i \geq 3$, we get
\begin{equation*}
\begin{split}
f_m(n)&\leq 2\bigg[(\frac{14}{15})^\frac{m}{6} \frac{n^{\frac{m}{2}}}{2^{\frac{m}{2}}(\frac{m}{2})!}+\frac{n^{\frac{m}{2}-1}\log (\frac{n}{2})}{2^{\frac{m}{2}-1}}\bigg] +2 (\frac{n}{2})\bigg[(\frac{14}{15})^\frac{m-2}{6} \frac{n^{\frac{m}{2}-1}}{2^{\frac{m}{2}-1}(\frac{m}{2}-1)!}+\frac{n^{\frac{m}{2}-2}\log (\frac{n}{2})}{2^{\frac{m}{2}-2}}\bigg]\\
&\qquad+2 \bigg[(\frac{14}{15})\frac{n^2}{2^2 \cdot 2!}+\frac{n\log (\frac{n}{2})}{2}\bigg]\bigg[(\frac{14}{15})^\frac{m-4}{6} \frac{n^{\frac{m}{2}-2}}{2^{\frac{m}{2}-2}(\frac{m}{2}-2)!}+\frac{n^{\frac{m}{2}-3}\log (\frac{n}{2})}{2^{\frac{m}{2}-3}}\bigg]\\
&\qquad +2 \bigg[(\frac{14}{15}) \frac{n^3}{2^3\cdot 3!}+\frac{n^2\log (\frac{n}{2})}{2^2}\bigg]\bigg[(\frac{14}{15})^{\frac{m}{6}-1} \frac{n^{\frac{m}{2}-3}}{2^{\frac{m}{2}-3}(\frac{m}{2}-3)!}+\frac{n^{\frac{m}{2}-4}\log (\frac{n}{2})}{2^{\frac{m}{2}-4}}\bigg]+\cdots\\
&\qquad +\bigg[(\frac{14}{15})^\frac{m}{12} \frac{n^{\frac{m}{4}}}{2^{\frac{m}{4}}(\frac{m}{4})!}+\frac{n^{\frac{m}{4}-1}\log (\frac{n}{2})}{2^{\frac{m}{4}-1}}\bigg]\bigg[(\frac{14}{15})^\frac{m}{12} \frac{n^{\frac{m}{4}}}{2^{\frac{m}{4}}(\frac{m}{4})!}+\frac{n^{\frac{m}{4}-1}\log (\frac{n}{2})}{2^{\frac{m}{4}-1}}\bigg]\\
&\qquad +\bigg[2 {\frac{n}{2} \choose \frac{m}{2}-1} + 2 {{\frac{n}{2}} \choose {\frac{m}{2}}-2}{{\frac{n}{2}} \choose 1}+\cdots + 2{{\frac{n}{2}} \choose {\frac{m}{4}}-1}{{\frac{n}{2}} \choose {\frac{m}{4}}}\bigg]\\
\end{split}
\end{equation*}
Grouping terms according to their asymptotic behavior of $n$ and since there are at most $\frac{m}{4}$ terms, each contributing $\frac{2n^{\frac{m}{2}-2}\log ^2 (\frac{n}{2})}{2^{\frac{m}{2}-2}}$, we have
\begin{equation*}
\begin{split}
f_m(n)&\leq \bigg[2(\frac{14}{15})^\frac{m}{6} \frac{n^{\frac{m}{2}}}{2^{\frac{m}{2}}(\frac{m}{2})!}+2(\frac{14}{15})^\frac{m-2}{6} \frac{n^{\frac{m}{2}}}{2^{\frac{m}{2}}(\frac{m}{2}-1)!}+2(\frac{14}{15})^\frac{m+2}{6} \frac{n^{\frac{m}{2}}}{2^{\frac{m}{2}}\cdot 2!\cdot (\frac{m}{2}-2)!}+ \cdots \\
&\qquad + 2(\frac{14}{15})^\frac{m}{6} \frac{n^{\frac{m}{2}}}{2^{\frac{m}{2}}\cdot (\frac{i}{2})!(\frac{m}{2}-\frac{i}{2})!}+\cdots + (\frac{14}{15})^\frac{m}{6} \frac{n^{\frac{m}{2}}}{2^{\frac{m}{2}}(\frac{m}{4})!(\frac{m}{4})!}\bigg]\\
&\qquad +\frac{2n^{\frac{m}{2}-1}\log(\frac{n}{2})}{2^{\frac{m}{2}-1}}+\frac{2n^{\frac{m}{2}-1}\log(\frac{n}{2})}{2^{\frac{m}{2}-1}}+2\bigg[ (\frac{14}{15}) \frac{n^{\frac{m}{2}-1}\log(\frac{n}{2})}{2^{\frac{m}{2}-1}\cdot 2!}+ (\frac{14}{15})^{\frac{m-4}{6}}\frac{n^{\frac{m}{2}-1}\log(\frac{n}{2})}{2^{\frac{m}{2}-1}\cdot(\frac{m}{2}-2)!}\bigg]\\
&\qquad + 2\bigg[ (\frac{14}{15}) \frac{n^{\frac{m}{2}-1}\log(\frac{n}{2})}{2^{\frac{m}{2}-1}\cdot 3!}+ (\frac{14}{15})^{\frac{m-6}{6}}\frac{n^{\frac{m}{2}-1}\log(\frac{n}{2})}{2^{\frac{m}{2}-1}\cdot(\frac{m}{2}-3)!}\bigg]+\cdots\\
&\qquad +\bigg[ (\frac{14}{15})^{\frac{m}{12}} \frac{n^{\frac{m}{2}-1}\log(\frac{n}{2})}{2^{\frac{m}{2}-1}\cdot (\frac{m}{4})!}+ (\frac{14}{15})^{\frac{m}{12}} \frac{n^{\frac{m}{2}-1}\log(\frac{n}{2})}{2^{\frac{m}{2}-1}\cdot (\frac{m}{4})!}\bigg]\\
&\qquad +\frac{2 n^{\frac{m}{2}-2}\log ^2 (\frac{n}{2})}{2^{\frac{m}{2}-2}}\frac{m}{4}\\
&\qquad +\bigg[2\frac{n^{\frac{m}{2}-1}}{2^{\frac{m}{2}-1}\cdot (\frac{m}{2}-1)!}+ 2\frac{n^{\frac{m}{2}-1}}{2^{\frac{m}{2}-1}\cdot (\frac{m}{2}-2)!}+\cdots+ 2\frac{n^{\frac{m}{2}-1}}{2^{\frac{m}{2}-1}\cdot (\frac{m}{4})! (\frac{m}{4}-1)!}\bigg]\\
\end{split}
\end{equation*}
Since $(\frac{14}{15})^k <1$, for positive $k \geq 0$, for the terms of asymptotic order $n^{\frac{m}{2}-1}\log n$, we have
\begin{equation*}
\begin{split}
f_m(n)&\leq \bigg[2(\frac{14}{15})^\frac{m}{6} \frac{n^{\frac{m}{2}}}{2^{\frac{m}{2}}(\frac{m}{2})!}+2(\frac{14}{15})^\frac{m-2}{6} \frac{n^{\frac{m}{2}}}{2^{\frac{m}{2}}(\frac{m}{2}-1)!}+2(\frac{14}{15})^\frac{m+2}{6} \frac{n^{\frac{m}{2}}}{2^{\frac{m}{2}}\cdot 2!\cdot (\frac{m}{2}-2)!}+\cdots \\
&\qquad + 2(\frac{14}{15})^\frac{m}{6} \frac{n^{\frac{m}{2}}}{2^{\frac{m}{2}}\cdot (\frac{i}{2})!(\frac{m}{2}-\frac{i}{2})!}+\cdots + (\frac{14}{15})^\frac{m}{6} \frac{n^{\frac{m}{2}}}{2^{\frac{m}{2}}(\frac{m}{4})!(\frac{m}{4})!}\bigg]\\
&\qquad +\bigg[\frac{2n^{\frac{m}{2}-1}\log(\frac{n}{2})}{2^{\frac{m}{2}-1}}+\frac{2n^{\frac{m}{2}-1}\log(\frac{n}{2})}{2^{\frac{m}{2}-1}}+\frac{2n^{\frac{m}{2}-1}\log(\frac{n}{2})}{2^{\frac{m}{2}-1}\cdot 2!}+ \frac{2n^{\frac{m}{2}-1}\log(\frac{n}{2})}{2^{\frac{m}{2}-1}\cdot(\frac{m}{2}-2)!}\\
&\qquad + \frac{2n^{\frac{m}{2}-1}\log(\frac{n}{2})}{2^{\frac{m}{2}-1}\cdot 3!}+ \frac{2n^{\frac{m}{2}-1}\log(\frac{n}{2})}{2^{\frac{m}{2}-1}\cdot(\frac{m}{2}-3)!}+\cdots\\
&\qquad +\frac{n^{\frac{m}{2}-1}\log(\frac{n}{2})}{2^{\frac{m}{2}-1}\cdot (\frac{m}{4})!}+ \frac{n^{\frac{m}{2}-1}\log(\frac{n}{2})}{2^{\frac{m}{2}-1}\cdot (\frac{m}{4})!}\bigg]\\
&\qquad +\frac{2 n^{\frac{m}{2}-2}\log ^2 (\frac{n}{2})}{2^{\frac{m}{2}-2}}\frac{m}{4}\\
&\qquad +\bigg[2\frac{n^{\frac{m}{2}-1}}{2^{\frac{m}{2}-1}\cdot (\frac{m}{2}-1)!}+ 2\frac{n^{\frac{m}{2}-1}}{2^{\frac{m}{2}-1}\cdot (\frac{m}{2}-2)!}+\cdots+ 2\frac{n^{\frac{m}{2}-1}}{2^{\frac{m}{2}-1}\cdot (\frac{m}{4})! (\frac{m}{4}-1)!}\bigg]\\
\end{split}
\end{equation*}

Using the identity $\sum_{i=0}^{N} \frac{1}{i!\cdot (N-i)!}=\frac{2^N}{N!}$, on the terms in the first and last square braces we get,
\begin{equation*}
\begin{split}
f_m(n)&\leq \bigg[(\frac{14}{15})^{\frac{m}{6}}\frac{n^{\frac{m}{2}}}{(\frac{m}{2})!}-2(\frac{14}{15})^{\frac{m}{6}}\frac{n^{\frac{m}{2}}}{2^{\frac{m}{2}}(\frac{m}{2}-1)!}-2(\frac{14}{15})^{\frac{m}{6}}\frac{n^{\frac{m}{2}}}{2^{\frac{m}{2}}\cdot 2! \cdot(\frac{m}{2}-2)!}+2(\frac{14}{15})^{\frac{m-2}{6}}\frac{n^{\frac{m}{2}}}{2^{\frac{m}{2}}(\frac{m}{2}-1)!}\\
&\qquad +2(\frac{14}{15})^{\frac{m+2}{6}}\frac{n^{\frac{m}{2}}}{2^{\frac{m}{2}}\cdot 2!\cdot(\frac{m}{2}-2)!}\bigg]+\bigg[\frac{2n^{\frac{m}{2}-1}\log (\frac{n}{2})}{2^{\frac{m}{2}-1}}\bigg]\bigg[1+1+\frac{1}{2!}+\cdots +\frac{1}{(\frac{m}{2}-2)!}\bigg]\\
&\qquad + \frac{2 n^{\frac{m}{2}-2}\log ^2 (\frac{n}{2})}{2^{\frac{m}{2}-2}}\frac{m}{4} + \frac{n^{\frac{m}{2}-1}}{(\frac{m}{2}-1)!}\\
\end{split}
\end{equation*}
Simplifying
\begin{equation*}
\begin{split}
f_m(n)&\leq (\frac{14}{15})^{\frac{m}{6}}\frac{n^{\frac{m}{2}}}{(\frac{m}{2})!}\bigg[1-\frac{m}{2^{\frac{m}{2}}} \bigg\{1 -(\frac{14}{15})^{-\frac{1}{3}}\bigg\}- \frac{m^2}{2^{\frac{m}{2}+2}} \bigg\{1- (\frac{14}{15})^{\frac{1}{3}}\bigg\}\bigg]+\bigg[\frac{2n^{\frac{m}{2}-1}\log (\frac{n}{2})}{2^{\frac{m}{2}-1}}\bigg]e\\
&\qquad + \frac{2 n^{\frac{m}{2}-2}\log ^2 (\frac{n}{2})}{2^{\frac{m}{2}-2}}\frac{m}{4} + \frac{n^{\frac{m}{2}-1}}{2^{\frac{m}{2}-1}}\\
&\leq (\frac{14}{15})^{\frac{m}{6}}\frac{n^{\frac{m}{2}}}{(\frac{m}{2})!}\bigg[1-\frac{m}{2^{\frac{m}{2}}} \bigg\{1 -(\frac{14}{15})^{-\frac{1}{3}}\bigg\}- \frac{m^2}{2^{\frac{m}{2}+2}} \bigg\{1- (\frac{14}{15})^{\frac{1}{3}}\bigg\}\bigg]+\bigg[\frac{2\cdot e \cdot n^{\frac{m}{2}-1}\log n}{2^{\frac{m}{2}-1}}\\
&\qquad - \frac{2\cdot e \cdot n^{\frac{m}{2}-1}}{2^{\frac{m}{2}-1}}\bigg]+ \frac{2 n^{\frac{m}{2}-2}\log ^2 (\frac{n}{2})}{2^{\frac{m}{2}-2}}\frac{m}{4} + \frac{n^{\frac{m}{2}-1}}{2^{\frac{m}{2}-1}}\\
&\leq (\frac{14}{15})^{\frac{m}{6}}\frac{n^{\frac{m}{2}}}{(\frac{m}{2})!}+\frac{2e\cdot n^{\frac{m}{2}-1}\log n}{2^{\frac{m}{2}-1}}+\frac{mn^{\frac{m}{2}-2}\log ^2 (\frac{n}{2})}{2 \cdot 2^{\frac{m}{2}-2}}\\
&\leq (\frac{14}{15})^{\frac{m}{6}}\frac{n^{\frac{m}{2}}}{(\frac{m}{2})!}+n^{\frac{m}{2}-1}\log n\bigg[\frac{2e}{2^{\frac{m}{2}-1}}+\frac{m \log (\frac{n}{2})}{n \cdot 2^{\frac{m}{2}-1}}\bigg]\\
\end{split}
\end{equation*}
For all values of $m \geq 8$,  $\bigg[\frac{2e}{2^{\frac{m}{2}-1}}+\frac{m \log (\frac{n}{2})}{n \cdot 2^{\frac{m}{2}-1}}\bigg] \leq 1$. Therefore,
\begin{equation*}
\begin{split}
f_m(n) &\leq (\frac{14}{15})^{\frac{m}{6}}\frac{n^{\frac{m}{2}}}{(\frac{m}{2})!}+n^{\frac{m}{2}-1}\log n\\
\end{split}
\end{equation*}

For the case that $m$ is a multiple of $2$ but not $4$, an argument similar to the above can be used to show that $f_m(n) \leq (\frac{14}{15})^{\frac{m}{6}}\frac{n^{\frac{m}{2}}}{(\frac{m}{2})!}+n^{\frac{m}{2}-1}\log n$.

\end{proof}

We now prove the stated bound for $c_{125}$.
\begin{lemma}
For any S and T, and even a and b, ${S \choose a} \times {T \choose b+1} \cup {S \choose a+1} \times {T \choose b}$ can be exactly covered using ${|S| \choose \frac{a}{2}} \cdot {|T| \choose \frac{b}{2}}$ blocks.
\end{lemma}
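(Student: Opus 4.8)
The plan is to cover both layers at once using the ``fixed even positions'' decomposition of Cioab{\u a} et al.~\cite{cioabua2009decompositions}, the point being that the parity of $a$ and $b$ makes a single family of blocks do the job. First fix any linear order on $S\cup T$ in which every element of $S$ precedes every element of $T$. Put $r=a+b+1$; since $a,b$ are even, $r=2k+1$ with $k=\frac{a}{2}+\frac{b}{2}$. Recall that for each increasing sequence $v_1<\dots<v_k$ in $S\cup T$ there is a block $B(v_1,\dots,v_k)$ whose even parts $P_2,P_4,\dots,P_{2k}$ are the singletons $\{v_1\},\dots,\{v_k\}$ and whose odd parts $P_1,P_3,\dots,P_{2k+1}$ are the ``gaps'' $\{w: v_{i-1}<w<v_i\}$ (with $v_0=-\infty$, $v_{k+1}=+\infty$); an $r$-set lies in $B(v_1,\dots,v_k)$ exactly when the subsequence formed by its elements in even positions (in sorted order) is $(v_1,\dots,v_k)$, so these blocks partition ${S\cup T\choose r}$.

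The key observation is that if $X$ is an $r$-set with $|X\cap S|\in\{a,a+1\}$, then the elements of $X$ occupying even positions are exactly $\frac{a}{2}$ elements of $S$ followed by $\frac{b}{2}$ elements of $T$. Indeed, in sorted order the elements of $X$ lying in $S$ form an initial block of length $a$ or $a+1$; the even positions that are $\le a$, of which there are $\frac{a}{2}$, therefore lie in $S$, the even positions that are $\ge a+2$, of which there are $\frac{b}{2}$, lie in $T$, and there is no even position equal to $a+1$ because $a$ is even. Hence the only blocks of the decomposition that meet ${S\choose a}\times{T\choose b+1}\cup{S\choose a+1}\times{T\choose b}$ are the $B(v_1,\dots,v_k)$ with $v_1,\dots,v_{a/2}\in S$ and $v_{a/2+1},\dots,v_k\in T$, and there are exactly ${|S|\choose a/2}{|T|\choose b/2}$ of these.

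It remains to verify that every such block is contained in the union of the two layers; edge-disjointness and the ``exactly cover'' property are then inherited from the global decomposition of ${S\cup T\choose r}$, and the lemma follows. Fix $B(v_1,\dots,v_k)$ with $v_1,\dots,v_{a/2}\in S$ and $v_{a/2+1},\dots,v_k\in T$. Because $S$ precedes $T$ and $v_{a/2}\in S$, the parts $P_1,\dots,P_a$ (the first $\frac{a}{2}$ gaps and the first $\frac{a}{2}$ singletons) are subsets of $S$ and contribute $a$ elements of $S$ to every edge; likewise $P_{a+2},\dots,P_{2k+1}$ are subsets of $T$ and contribute $b$ elements of $T$; and the single remaining part $P_{a+1}=\{w: v_{a/2}<w<v_{a/2+1}\}$ is the unique part straddling $S$ and $T$, so each edge has either $a+1$ or $a$ of its elements in $S$ and thus lies in one of the two layers. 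The boundary cases $a=0$ and $b=0$ are handled by the conventions $v_0=-\infty$ and $v_{k+1}=+\infty$, which respectively place all of $S$, resp.\ all of $T$, inside the straddling part $P_{a+1}$.

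The only delicate point is the index bookkeeping in the last step (and the role of the conventions $v_0,v_{k+1}$ in the degenerate cases); the substantive input is the parity observation of the second paragraph, which is precisely what lets one family of blocks cover both layers simultaneously.
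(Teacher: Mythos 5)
Your proof is correct and rests on the same construction the paper uses: a family of interval--singleton blocks indexed by ${S \choose a/2}\times{T \choose b/2}$, with a single straddling part absorbing the one element that may fall on either side of the $S$/$T$ boundary, exactly covering both layers at once thanks to the parity of $a$ and $b$. The only cosmetic difference is that you obtain these blocks by restricting the global fixed-even-positions decomposition of ${S\cup T \choose a+b+1}$ (under an order placing $S$ before $T$), which lets disjointness be inherited for free, whereas the paper writes the blocks down directly with the straddling part formed from the two tails; the underlying idea is identical.
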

\begin{proof}
Order the elements of $S$ and $T$. Pick $\frac{a}{2}$ elements of $S$, say $s_{i_1},s_{i_2},\cdots,s_{i_{\frac{a}{2}}}$, and $\frac{b}{2}$ elements of $T$, say $t_{j_1},t_{j_2},\cdots,t_{j_{\frac{b}{2}}}$. We associate a block corresponding to these sets as follows:
\begin{equation*}
\begin{split}
&\{s_1,\cdots,s_{i_1-1}\},\{s_{i_1}\},\cdots ,\{s_{i_{\frac{a}{2}-1}+1},\cdots,s_{i_{\frac{a}{2}}-1}\},\{s_{i_{\frac{a}{2}}}\},\\
&\{t_1,\cdots,t_{j_1-1}\},\{t_{j_1}\},\cdots ,\{t_{j_{\frac{b}{2}-1}+1},\cdots,t_{j_{\frac{b}{2}}-1}\},\{t_{j_{\frac{b}{2}}}\},\{s_{i_{\frac{a}{2}}+1},\cdots,s_p,t_{j_{\frac{b}{2}}+1},\cdots,t_q\}.
\end{split}
\end{equation*}
Among these take only the blocks which have $a+b+1$ parts.
Note that these form a disjoint cover.
\end{proof}
\begin{lemma}
For large S and T, and even a and b, ${S \choose a} \times {T \choose b+1} \cup {S \choose a+1} \times {T \choose b}$ can be exactly covered using $[(\frac{14}{15})^{\frac{b}{6}}+(\frac{14}{15})^{\frac{a}{6}}](1+o(1)){|S| \choose \frac{a}{2}} \cdot {|T| \choose \frac{b}{2}}$ blocks. Here the $o(1)$ term is as $|S| \hspace{1mm} {\&} \hspace{1mm} |T|$ go to $\infty$.
\end{lemma}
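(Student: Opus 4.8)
The plan is to reduce $\mathcal{F}:=\binom{S}{a}\times\binom{T}{b+1}\cup\binom{S}{a+1}\times\binom{T}{b}$ to a product of one decomposition living on $S$ and one living on $T$, and then feed in Theorem~1. The basic observation is that if $\mathcal{A}$ exactly covers the complete $p$-graph on $S$ and $\mathcal{B}$ exactly covers the complete $q$-graph on $T$, then for each $A\in\mathcal{A}$, $B\in\mathcal{B}$ the block whose parts are the parts of $A$ together with the parts of $B$ is complete $(p+q)$-partite, and these $|\mathcal{A}|\cdot|\mathcal{B}|$ blocks exactly cover $\binom{S}{p}\times\binom{T}{q}$. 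Applying this to the two pieces of $\mathcal{F}$ separately reduces the lemma to the estimate
\begin{equation*}
f_a(|S|)\,f_{b+1}(|T|)+f_{a+1}(|S|)\,f_b(|T|)\ \le\ \Big[(\tfrac{14}{15})^{b/6}+(\tfrac{14}{15})^{a/6}\Big](1+o(1))\binom{|S|}{a/2}\binom{|T|}{b/2}.
\end{equation*}

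For the even-uniformity factors I would invoke Theorem~1: for even $a\ge 6$ it gives $f_a(|S|)\le(\tfrac{14}{15})^{a/6}(1+o(1))\binom{|S|}{a/2}$, and likewise for $f_b(|T|)$ when $b\ge 6$. For $a=4$ the bound $f_4(|S|)\le\tfrac{14}{15}(1+o(1))\binom{|S|}{2}$ of \cite{leader2017decomposing} already lies below $(\tfrac{14}{15})^{4/6}(1+o(1))\binom{|S|}{2}$, and $a=0$ is the degenerate case $f_0=1=(\tfrac{14}{15})^0\binom{|S|}{0}$. For the odd-uniformity factors $\binom{S}{a+1}$ and $\binom{T}{b+1}$, the construction of Cioab\u{a} et al.\ \cite{cioabua2009decompositions} gives covers by at most $\binom{|S|-(a+2)/2}{a/2}=(1+o(1))\binom{|S|}{a/2}$ and $(1+o(1))\binom{|T|}{b/2}$ blocks respectively. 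Multiplying the two products and adding, the cross-terms of lower polynomial (and logarithmic) order in $|S|,|T|$ get absorbed into the $o(1)$, exactly as in the proof of Theorem~1, and what remains is the claimed coefficient $(\tfrac{14}{15})^{a/6}+(\tfrac{14}{15})^{b/6}$.

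The step I expect to be the real obstacle is the case $a=2$ (and, symmetrically, $b=2$): Graham--Pollak forces $f_2(|S|)=|S|-1$, so no $(\tfrac{14}{15})$-type saving is available on that factor, and the product argument above only yields the weaker coefficient $1+(\tfrac{14}{15})^{b/6}$ for the corresponding layer rather than $(\tfrac{14}{15})^{1/3}+(\tfrac{14}{15})^{b/6}$. Here I would abandon the layer-by-layer split and instead apply Lemma~1 directly to $\mathcal{F}$, which exactly covers it with $\binom{|S|}{1}\binom{|T|}{b/2}$ blocks, i.e.\ with coefficient $1$; one then only needs $1\le(\tfrac{14}{15})^{1/3}+(\tfrac{14}{15})^{b/6}$, which holds for every $b$ in the range relevant to bounding $c_r$ for the values of $r$ in question. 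The same appeal to Lemma~1 disposes of the remaining small cases such as $a=0,\ b=2$. The rest is routine bookkeeping of the error terms into the $o(1)$.
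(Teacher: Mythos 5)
Your proposal is correct and follows essentially the same route as the paper: split the union into its two product pieces, cover each by the product of an even-uniformity decomposition from Theorem~1 and an odd-uniformity decomposition from the Cioab\u{a} et al.\ construction, and absorb the lower-order cross terms into the $o(1)$. The only difference is your explicit treatment of the small cases $a\in\{0,2,4\}$ (where Theorem~1 does not apply and, for $a=2$, the stated coefficient requires falling back on Lemma~1), which the paper passes over in silence and which are not needed for its application with $a=b=62$.
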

\begin{proof}
The hypergraph ${S \choose a} \times {T \choose b+1}$ can be exactly covered using $f_a(|S|)\cdot f_{b+1}(|T|)$ blocks. By Theorem 1, this is at most $(\frac{14}{15})^{\frac{a}{6}}(1+o(1)){|S| \choose \frac{a}{2}}{|T| \choose \frac{b}{2}}$ blocks. Likewise, ${S \choose a+1} \times {T \choose b}$ can be exactly covered using $(\frac{14}{15})^{\frac{b}{6}}(1+o(1)){|S| \choose \frac{a}{2}} {|T| \choose \frac{b}{2}}$ blocks.
\end{proof}
\begin{lemma}
For any odd r=4d+1, if ${\frac{n}{2} \choose \floor{\frac{r}{2}}} \times {\frac{n}{2} \choose \ceil{\frac{r}{2}}} \cup {\frac{n}{2} \choose \ceil{\frac{r}{2}}} \times {\frac{n}{2} \choose \floor{\frac{r}{2}}}$ can be covered using $\alpha (1+o(1)) {\frac{n}{2} \choose \floor{\frac{r}{4}}}^2$ blocks s.t. $\alpha <1$, then $f_r(n) \leq c_r(\alpha)\cdot(1+o(1)){n \choose \floor{\frac{r}{2}}}$ where $c_r(\alpha) <1$.
\end{lemma}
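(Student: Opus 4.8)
The plan is to build an exact cover of $K_n^{(r)}$ by splitting the vertex set into two equal halves $S$ and $T$ (assume $n$ even; the odd case is identical up to the $(1+o(1))$ factors), decomposing the edge set according to how edges meet $S$, and then grouping the resulting layers into consecutive pairs, each of which has exactly the shape handled by Lemma 1 --- with the single exception of the central pair, which is covered using the hypothesis. Write $r=4d+1$, so $\floor{r/2}=2d$, $\ceil{r/2}=2d+1$, $\floor{r/4}=d$, and let $|S|=|T|=n/2$. For $0\le k\le r$ let $L_k={S\choose k}\times{T\choose r-k}$ be the layer of edges meeting $S$ in exactly $k$ vertices; the layers are pairwise edge-disjoint with union $E(K_n^{(r)})$. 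Since $r$ is odd the $r+1=4d+2$ layers split evenly into the $2d+1$ pairs $P_j=L_{2j}\cup L_{2j+1}$, $0\le j\le 2d$, which partition $E(K_n^{(r)})$. Putting $a=2j$ and $b=4d-2j$ (both even, with $a+b+1=r$, $a+1=2j+1$ and $b+1=r-2j$) we have $P_j={S\choose a}\times{T\choose b+1}\cup{S\choose a+1}\times{T\choose b}$, precisely the hypergraph of Lemma 1, so $P_j$ can be exactly covered with $\binom{n/2}{j}\binom{n/2}{2d-j}$ blocks; and for $j=d$ we have $a=b=2d$, so $P_d={S\choose\floor{r/2}}\times{T\choose\ceil{r/2}}\cup{S\choose\ceil{r/2}}\times{T\choose\floor{r/2}}$ is exactly the hypergraph in the hypothesis, hence exactly coverable with $\alpha(1+o(1))\binom{n/2}{d}^2$ blocks.

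Concatenating these covers over $j=0,\dots,2d$ is again an exact cover (the $P_j$ are pairwise edge-disjoint), so
\begin{equation*}
f_r(n)\ \le\ \alpha(1+o(1))\binom{n/2}{d}^2+\sum_{\substack{j=0\\ j\ne d}}^{2d}\binom{n/2}{j}\binom{n/2}{2d-j}.
\end{equation*}
By the Vandermonde identity $\sum_{j=0}^{2d}\binom{n/2}{j}\binom{n/2}{2d-j}=\binom{n}{2d}$, so the right-hand side equals $\binom{n}{2d}-(1-\alpha)(1+o(1))\binom{n/2}{d}^2$. With $r$ (hence $d$) fixed and $n\to\infty$ one has $\binom{n/2}{d}^2=(1+o(1))\frac{(n/2)^{2d}}{(d!)^2}=(1+o(1))\frac{\binom{2d}{d}}{4^d}\binom{n}{2d}$, and therefore
\begin{equation*}
f_r(n)\ \le\ \left(1-(1-\alpha)\tfrac{\binom{2d}{d}}{4^d}\right)(1+o(1))\binom{n}{\floor{r/2}}.
\end{equation*}
Thus we may take $c_r(\alpha)=1-(1-\alpha)\binom{2d}{d}/4^d$; this is a positive constant since $\binom{2d}{d}<4^d$, and it is strictly less than $1$ because $\alpha<1$ and $\binom{2d}{d}/4^d>0$.

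The one genuinely new step is the pairing of layers: it turns a single efficient cover of the central layer-pair into an efficient cover of all of $K_n^{(r)}$, each remaining layer-pair costing only its "free" Lemma 1 amount, and the rest is routine. The point that warrants a word of care is that the block construction of Lemma 1 must still produce an exact cover for the two extreme pairs $P_0$ and $P_{2d}$, where one of $a,b$ is $0$ and the corresponding singleton parts of each block are absent --- one should check the degenerate blocks cover exactly $L_0\cup L_1$, respectively $L_{4d}\cup L_{4d+1}$. Folding the finitely many $(1+o(1))$ error terms (one per pair, with $r$ fixed) into a single $(1+o(1))$, and treating odd $n$, are both immediate. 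As an optional strengthening, not needed for the statement but useful for pushing $c_r$ well below $1$ when $d$ is large, one can instead cover each non-central pair $P_j$ with $2j\ge 6$ and $4d-2j\ge 6$ via Lemma 2, replacing the coefficient $1$ there by $\min\{1,(\frac{14}{15})^{j/3}+(\frac{14}{15})^{(2d-j)/3}\}$.
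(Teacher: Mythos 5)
Your proposal is correct and takes essentially the same route as the paper: split the vertex set into two halves, pair consecutive layers $L_{2j}\cup L_{2j+1}$, cover the non-central pairs via Lemma 1 and the central pair via the hypothesis, and finish with Vandermonde. Your explicit constant $c_r(\alpha)=1-(1-\alpha)\binom{2d}{d}/4^{d}$ is in fact slightly sharper than the paper's $1-(1-\alpha)e^{-r/2}$, but the argument is the same.
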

\begin{proof}
Recall the inequality.
\begin{equation*}
\begin{split}
f_r(n) \leq 2\cdot f_r(\frac{n}{2})+2\cdot f_1(\frac{n}{2})\cdot f_{r-1}(\frac{n}{2})+ \cdots +2 \cdot f_{\frac{r-1}{2}}(\frac{n}{2}) \cdot f_{\frac{r+1}{2}}(\frac{n}{2})\\
\end{split}
\end{equation*}
Pairing up two consecutive terms each and using Lemma 1 for each pair we have:
\begin{equation*}
\begin{split}
f_r(n) &\leq 2 (1+o(1)) \bigg[ \sum_{i=0}^{\frac{r-5}{4}}{\frac{n}{2} \choose i}{\frac{n}{2} \choose {\frac{r-1}{2}}-i}\bigg]+ 2 f_{\floor{\frac{r}{2}}}(\frac{n}{2})f_{\ceil{\frac{r}{2}}}(\frac{n}{2})\\
\end{split}
\end{equation*}

Using the hypothesis and by adding and subtracting ${\frac{n}{2} \choose \floor{\frac{r}{4}}}^2$ to the above equation we have:

\begin{equation*}
\begin{split}
f_r(n) &\leq 2 (1+o(1)) \bigg[ \sum_{i=0}^{\frac{r-5}{4}}{\frac{n}{2} \choose i}{\frac{n}{2} \choose {\frac{r-1}{2}}-i}\bigg]+ {\frac{n}{2} \choose \floor{\frac{r}{4}}}^2+2 f_{\floor{\frac{r}{2}}}(\frac{n}{2})f_{\ceil{\frac{r}{2}}}(\frac{n}{2})-{\frac{n}{2} \choose \floor{\frac{r}{4}}}^2\\
&\leq (1+o(1)){n \choose \floor{\frac{r}{2}}}+ 2 f_{\floor{\frac{r}{2}}}(\frac{n}{2})f_{\ceil{\frac{r}{2}}}(\frac{n}{2})-{\frac{n}{2} \choose \floor{\frac{r}{4}}}^2\\
&\leq (1+o(1)){n \choose \floor{\frac{r}{2}}}-(1-\alpha)(1+o(1)){\frac{n}{2} \choose \floor{\frac{r}{4}}}^2\\
&\leq [1-\frac{(1-\alpha)}{e^{\frac{r}{2}}}]{n \choose \floor{\frac{r}{2}}}\\
\end{split}
\end{equation*}
\end{proof}
\begin{lemma}
$f_{125}(n) \leq c_{125}(1+o(1)){n \choose 62}$, for a constant $c_{125} < 1$.
\end{lemma}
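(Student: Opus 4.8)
The plan is to apply Lemma~2 and then Lemma~3. Write $r=125=4d+1$ with $d=31$, so $\floor{r/2}=62$, $\ceil{r/2}=63$ and $\floor{r/4}=31$. By Lemma~3 it suffices to produce, for $S$ and $T$ each of size $\frac{n}{2}$, an exact cover of ${\frac{n}{2} \choose 62}\times{\frac{n}{2} \choose 63}\cup{\frac{n}{2} \choose 63}\times{\frac{n}{2} \choose 62}$ using $\alpha(1+o(1)){\frac{n}{2} \choose 31}^2$ blocks for some constant $\alpha<1$.

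First I would invoke Lemma~2 with $a=b=62$. Both are even and at least $6$, so Theorem~1 is available for $f_{62}$, and taking $|S|=|T|=\frac{n}{2}$ Lemma~2 yields an exact cover using $\big[(\frac{14}{15})^{62/6}+(\frac{14}{15})^{62/6}\big](1+o(1)){\frac{n}{2} \choose 31}^2=2(\frac{14}{15})^{31/3}(1+o(1)){\frac{n}{2} \choose 31}^2$ blocks, the $o(1)$ being as $n\to\infty$. Hence one may set $\alpha=2(\frac{14}{15})^{31/3}$.

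The one quantitative point to check is $\alpha<1$. Since $\ln\frac{14}{15}\approx-0.06899$, we get $(\frac{14}{15})^{31/3}=e^{(31/3)\ln(14/15)}\approx e^{-0.7129}\approx0.4902$, so $\alpha\approx0.9804<1$. Feeding this $\alpha$ into Lemma~3 gives $f_{125}(n)\leq c_{125}(1+o(1)){n \choose 62}$ with $c_{125}=1-\frac{1-\alpha}{e^{125/2}}<1$, as required.

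I do not expect a real obstacle: all the content is already in Theorem~1 and Lemmas~2 and~3, and only the elementary estimate above remains. The one thing worth flagging is how thin the margin is --- for $r=113$ the same recipe would give $\alpha=2(\frac{14}{15})^{28/3}>1$, so sharper even-$r$ bounds (rather than just Theorem~1) are needed to push the odd threshold below $125$; for $r=125$ itself, Lemma~2 already does the job.
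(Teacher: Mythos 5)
Your proof is correct and follows essentially the same route as the paper: apply Lemma 2 with $a=b=62$ (equivalently, bound $2f_{62}(\frac{n}{2})f_{63}(\frac{n}{2})$ via Theorem 1) to get $\alpha=2(\frac{14}{15})^{62/6}\approx 0.98<1$, then invoke Lemma 3. Your closing remark about why $r=113$ needs a sharper computation matches the paper's own observation.
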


\begin{proof}
The hypergraph ${{S} \choose {63}} \times {{T} \choose {62}} \cup {{S} \choose {62}} \times {{T} \choose {63}}$ can be exactly covered using at most $2\cdot f_{62}(\frac{n}{2})f_{63}(\frac{n}{2})$ blocks. Using Lemma 2 we have 
\begin{equation*}
\begin{split}
2 \cdot f_{62}(\frac{n}{2}) f_{63}(\frac{n}{2})&\leq 2\cdot(\frac{14}{15})^{\frac{62}{6}}(1+o(1)){\frac{n}{2} \choose 31}^2\\
& \leq 0.981 \cdot (1+o(1)){\frac{n}{2} \choose 31}^2
\end{split}
\end{equation*}
The result follows from Lemma 3.
\end{proof}
As a consequence of Lemma 4, we have $c_{125}<1$. In fact solving the recurrence exactly for Theorem 1 using a computer program yields $c_{113} < 1$. 
\nocite{leader2017improved}
\bibliographystyle{plain}
\bibliography{reference}
\end{document}